\documentclass[a4paper]{article}
\usepackage{amsthm,amssymb,amsmath,enumerate,graphicx,epsf}

\newcommand{\COLORON}{0}
\newcommand{\NOTESON}{0}
\newcommand{\Debug}{0}
\usepackage[usenames,dvips]{color} 
\usepackage{amsthm,amssymb,amsmath,bbm,enumerate,graphicx,epsf,stmaryrd}
\usepackage[dvips, bookmarks, colorlinks=false, breaklinks=true]{hyperref} 

\hyphenation{com-pac-ti-fi-cation}

\newcommand{\comment}[1]{}
\newcommand{\COMMENT}[1]{}

\definecolor{darkgray}{rgb}{0.3,0.3,0.3}
\newcommand{\defi}[1]{{\color{darkgray}\emph{#1}}}

\newcommand{\acknowledgement}{\section*{Acknowledgement}}



\comment{
	\begin{lemma}\label{}	
\end{lemma}
\begin{proof}

\end{proof}

\begin{theorem}\label{}
\end{theorem} 
\begin{proof} 	

\end{proof}

}



\newtheorem{proposition}{Proposition}[section]
\newtheorem{definition}[proposition]{Definition}
\newtheorem{theorem}[proposition]{Theorem}

\newtheorem{lemma}[proposition]{Lemma}

\newtheorem{conjecture}{{Conjecture}}[section]

\newtheorem{problem}[conjecture]{{Problem}}

\newtheorem{examp}[proposition]{Example}



\newcommand{\FIG}{0}

\ifnum \NOTESON = 1 \newcommand{\note}[1]{ 

\hspace*{-30pt}
	{\color{blue}  NOTE: \color{Turquoise}{\small  \tt \begin{minipage}[c]{1.1\textwidth}  #1 \end{minipage} \ignorespacesafterend }} 
	
	}
\else \newcommand{\note}[1]{} \fi

\newcommand{\afsubm}[1]{ \ifnum \Debug = 1 {\mymargin{#1}}
\fi} 

\ifnum \Debug = 1 
\else  \fi

\ifnum \FIG = 1 
\else  \fi

\ifnum \FIG = 1 
\else  \fi

\ifnum \Debug = 1 \usepackage[notref,notcite]{showkeys}
\fi

\ifnum \COLORON = 0 \renewcommand{\color}[1]{}
\fi



\newcommand{\N}{\ensuremath{\mathbb N}}
\newcommand{\R}{\ensuremath{\mathbb R}}
\newcommand{\Z}{\ensuremath{\mathbb Z}}

\newcommand{\cf}{\ensuremath{\mathcal F}}

\newcommand{\Gam}{\ensuremath{\Gamma}}

\newcommand{\Del}{\ensuremath{\Delta}}


\newcommand{\sm}{\backslash}

\newcommand{\sydi}{\triangle}


\newcommand{\seq}[1]{\ensuremath{(#1_n)_{n\in\N}}} 

 

\newcommand{\g}{\ensuremath{G\ }}
\newcommand{\G}{\ensuremath{G}}

\newcommand{\floor}[1]{\ensuremath{\left\lfloor #1 \right\rfloor}}






\newcommand{\Lr}[1]{Lemma~\ref{#1}}

\newcommand{\Tr}[1]{Theorem~\ref{#1}}

\newcommand{\Prb}[1]{Problem~\ref{#1}}



\newcommand{\Cg}{Cayley graph}


\newcommand{\fe}{for every}

\newcommand{\st}{such that}

\newcommand{\ti}{there is}

\newcommand{\wrt}{with respect to}





\newcommand{\labtequ}[2]{
 \begin{equation} \label{#1} 	\begin{minipage}[c]{0.9\textwidth}  #2 \end{minipage} \ignorespacesafterend \end{equation} }

\newcommand{\mymargin}[1]{
  \marginpar{%
    \begin{minipage}{\marginparwidth}\small%
      \begin{flushleft}%
        {\color{blue}#1}%
      \end{flushleft}%
   \end{minipage}%
  }%
}%

\newcommand{\mySection}[2]{}














\newcommand{\Aut}{Aut}

\newcommand{\pecy}{peripheral cycle}
\newcommand{\ws}{weakly residually finite}

\title{On covers of graphs by Cayley graphs} 
\author{Agelos Georgakopoulos\thanks{Supported by EPSRC grant EP/L002787/1.}
\medskip 
\\
  {Mathematics Institute}\\
 {University of Warwick}\\
  {CV4 7AL, UK}}

\date{}
\begin{document}
\maketitle

\begin{abstract}
We prove that every vertex transitive, planar, 1-ended, graph covers every graph whose balls of radius $r$ are isomorphic to the ball of radius $r$ in $G$ for a sufficiently large $r$. We ask whether this is a general property of finitely presented Cayley graphs, as well as further related questions.
\end{abstract}

\section{Introduction}

We will say that a graph $H$ is \defi{$r$-locally-$G$} if every ball of radius $r$ in $H$
is isomorphic to the ball of radius $r$ in $G$.
The following problem arose from a discussion with Itai Benjamini, and also appears in \cite{BenCoa}.
\begin{problem} \label{pr1}
Does every finitely presented Cayley graph \G\ admit an $r\in \N$ such that \G\ covers
every $r$-locally-$G$ graph?
\end{problem}

The condition of being finitely presented is important here: for example, no such $r$ exists for the standard \Cg\ of the lamplighter group $\Z \wr \Z_2$.

Benjamini \& Ellis \cite{BeElStr} show that $r=2$ suffices for the square grid $\Z^2$, while $r=3$ suffices for the $d$-dimensional lattice (i.e.\ the standard \Cg\ of $\Z^d$ for any $d\geq 3$.



The main result of this paper is 

\begin{theorem}\label{main}
Let \g be a vertex transitive planar 1-ended graph. Then \ti\ $r\in \N$ \st\ \g covers
every $r$-locally-$G$ graph (normally).
\end{theorem}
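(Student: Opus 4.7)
The plan is to construct, for a sufficiently large $r$, a normal covering map $\varphi: G \to H$ for any $r$-locally-$G$ graph $H$ by lifting walks, using the fact that for the planar 1-ended transitive $G$ the topological cycle space $\mathcal{C}(G)$ is generated by \emph{bounded-length} face boundaries.

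First I would set up the geometry of $G$. Since $G$ is vertex transitive, planar, and 1-ended, I would invoke the structure theory of transitive planar graphs to fix a planar embedding of $G$ in which every face is bounded by a finite cycle, with face lengths taking only finitely many values; let $\ell$ be the maximum such length. One-endedness is crucial here: it rules out infinite face boundaries and also implies that attaching a 2-cell along each face boundary produces a 2-complex homeomorphic to the plane, hence simply connected. Consequently the set $\mathcal{F}$ of face-boundary cycles generates $\mathcal{C}(G)$ in the sense of \DB, and every closed walk in $G$ is a product of conjugates of elements of $\mathcal{F}$.

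Next I choose $r := 2\ell + 2$ (any comparable bound works) and, given an $r$-locally-$G$ graph $H$, fix base vertices $v_0 \in V(G)$, $u_0 \in V(H)$ together with an isomorphism between the balls of radius $r$ around them sending $v_0$ to $u_0$. I would define $\varphi$ on vertices by walk-lifting: for $v \in V(G)$ pick a walk $W$ from $v_0$ to $v$, lift it one edge at a time by iteratively applying the local isomorphism between the ball of radius $r$ around the current vertex in $G$ and the corresponding ball in $H$, and set $\varphi(v)$ to be the endpoint of the lift. The local isomorphisms then extend $\varphi$ to edges, and the $r$-local-$G$ hypothesis makes $\varphi$ a local isomorphism on each ball of radius $r/2$, so \emph{if} $\varphi$ is well-defined it is automatically a covering map. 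Normality follows by a standard argument: vertex transitivity of $G$ lets us repeat the same construction from any other preimage $v_0' \in \varphi^{-1}(u_0)$, giving an automorphism of $G$ that fibrewise permutes $\varphi^{-1}(u_0)$ transitively, and these automorphisms generate a deck group acting transitively on every fibre.

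The main obstacle is well-definedness of $\varphi$, i.e.\ showing that every closed walk at $v_0$ in $G$ lifts to a closed walk at $u_0$ in $H$. By the generation statement above it suffices to check this for the face-boundary cycles $C \in \mathcal{F}$, plus a routine concatenation argument; and since each such $C$ has length at most $\ell < r$, the entire walk tracing $C$ stays inside a ball of $G$ isomorphic to a ball in $H$, hence closes up in $H$. The genuinely delicate point I foresee is that ``lifting $C$'' really means lifting an oriented, cyclically-ordered face boundary read off from the planar rotation at each vertex of $G$, whereas $H$ is not a priori equipped with any planar embedding; so one must verify that the combinatorial ball of radius $r$ around a vertex of $G$ determines its rotation system, and that the resulting rotations at adjacent vertices in $H$ are consistent enough to make the face-boundary lifts close up. Carrying out this rotation-system transfer, and checking that it survives under the bare $r$-local-$G$ hypothesis, is where the bulk of the work will lie.
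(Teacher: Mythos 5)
Your high-level strategy --- lift face-boundary walks from $G$ into $H$ and use simple connectivity of the $2$-complex obtained by capping faces to reduce well-definedness to the closing-up of individual face boundaries --- is the same skeleton as the paper's proof, which builds the cover by attaching one face-boundary at a time. But the step you defer to ``the bulk of the work'' is in fact the entire content of the theorem, and your sketch does not contain the idea needed to carry it out. The problem is not merely transferring a rotation system to $H$: it is that the isomorphism $B_r(v;G)\to B_r(\varphi(v);H)$ used to push a walk one edge forward is not unique, because $\operatorname{Aut}(G)$ may have non-trivial vertex stabilisers and distinct faces at a vertex may have equal lengths. Without a canonical way to choose the next edge, two different walks from $v_0$ to $v$ can legitimately be lifted to different endpoints even though every face boundary ``closes up'' for \emph{some} choice of local isomorphisms. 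The paper resolves this by introducing fundamental domains of flags and a colour map $\phi_H$ on flags of $H$ that is shown (Lemma~\ref{phipi}) to be independent of the choice of local isomorphism; the radius $n$ at which this works is the radius at which the flag-orbit structure of $\operatorname{Aut}(D_i(o))$ stabilises (Lemma~\ref{lemnFD}). Your proposal contains no substitute for this mechanism.

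Two further concrete problems. First, your choice $r=2\ell+2$ (with $\ell$ the maximum face length) is unjustified: the paper's $r$ is $n+1$ for the stabilisation radius $n$ above, and the author explicitly states it is unclear whether $r$ can be bounded in terms of the face lengths alone rather than also the vertex degree, so asserting such a bound would require a new argument. Second, your normality argument is circular as stated: vertex transitivity gives \emph{some} automorphism $\alpha$ with $\alpha(v_0)=v_0'$, but you need one satisfying $\varphi\circ\alpha=\varphi$, and for that you need both (a) an automorphism matching the flag data at $v_0$ and $v_0'$ (which requires the fundamental-domain setup, not just vertex transitivity) and (b) a uniqueness statement for the cover given its value on a single flag, which your walk-lifting construction would have to establish separately. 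The paper proves exactly this uniqueness in Lemma~\ref{Lmain} and derives normality from it.
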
 

Here, we say that a cover $c:V(G)\to V(H)$ is \defi{normal}, if for every $v, w \in V(G)$ such that $c(v) = c(w)$, there is an automorphism $\alpha$ of $G$ such that $\alpha(v) = \alpha(w)$ and $c\circ \alpha = c$. If $c:V(G)\to V(H)$ is a normal cover, then $H$ is a quotient of $G$ by a subgroup of $Aut(G)$, namely the group of `covering transformations'; see \cite[Lemma~16]{BeElStr} for a proof and more details. Normality of the covers was important in \cite{BeElStr}, as it allows one to reduce enumeration problems for graphs covered by lattices to counting certain subgroups of $Aut(G)$.

A natural approach for proving \Tr{main} is by glueing 2-cells to the $r$-locally-$G$ graph $H$ along cycles that map to face-boundaries of $G$ via local isomorphisms to obtain a surface $S_H$, and consider the universal covering map $c: \R^2 \to S_H$. Then $c^{-1}[H]$ is a 1-ended graph $G'$ embedded in $\R^2$ which is also $r$-locally-$G$, and if we could show that $G'$ is isomorphic to $G$ we would be done. The latter statement however turns out to be as hard as \Tr{main}  itself, and in fact we will obtain it as a byproduct of our proof\footnote{I would like to thank Bojan Mohar for suggesting this approach.}.

Let us call an infinite group \defi{\ws}, if all its \Cg s \g have the following property: \fe\ $r\in \N$, \ti\ a finite graph $H$ which is $r$-locally-$G$. It is not hard to prove that every residually finite group is \ws. Indeed, given a \Cg\ \g of a residually finite group $\Gamma$ and some $r$, we can find a homomorphism $h$ from $\Gamma$ to a finite group $\Delta$ which is injective on the ball of radius $r$ around the origin of $G$. Then the \Cg\ of $\Delta$ \wrt\ the generating set $h[S]$, where $S$ is the generating set of $G$, is indeed $r$-locally-$G$. Is the converse statement also true, that is,

\begin{problem}
Is every \ws\ group residually finite?
\end{problem}

If this is true it would yield an alternative definition of residually finite groups. If not, studying the relationship between \ws\ and sofic groups might be interesting.
Similar questions can be asked using graphs covered by \g rather than $r$-locally-$G$ graphs.


\medskip
Benjamini \& Ellis \cite{BeElRan} consider the uniform probability distribution on the  $r$-locally-$G$ graphs with $n$ vertices for $G=\Z^n$, and study properties of this random graphs as $n$ grows. They do so by exploiting normal covers in order to reduce the enumeration of such graphs to the enumeration of certain subgroups of $Aut(G)$, which had previously been studied. \Tr{main} paves the way for the study of the uniformly random $r$-locally-$G$ graph $H_n$ on $n$ vertices, with \g being e.g.\ a regular hyperbolic tessellation. The genus of $H_n$ can easily be seen to be linear in $n$ in our case (while it was always 1 in \cite{BeElRan} for $G=\Z^2$). Glueing metric 2-cells to $H_n$ as described above we obtain a random closed Riemannian surface.  I hope that this topic will be pursued in future work.

\comment{
WRONG:
We remark that it is easy for a cover to be normal:

\begin{theorem}\label{normal}
Let \g be a locally finite graph such that $Aut(G)$ is discrete. Then \ti\ $r\in \N$ \st\ 
every cover $c:V(G)\to V(H)$ of  injectivity radius at least $r$ is normal, where $H$ is an arbitrary graph.
\end{theorem} 
Here, the \defi{injectivity radius} of $c$ is the smallest $r\in \N$ \st\ the restriction of $c$ to the ball of radius $r$ around any vertex is injective.
}

\medskip
Our $r$ in \Tr{main} can be arbitrarily large. It is not clear from our proof whether there is an upper bound depending on the maximum co-degree (i.e.\ length of a face) of $G$ only, or it also depends e.g.\ on the vertex degree. The results of \cite{vapf} might be helpful for answering this question.
\medskip

Tessera and De La Salle (private communication) recently announced a positive answer to \Prb{pr1} under the condition that $Aut(G)$ is discrete, and a counterexample showing that this condition is necessary. 

\section{Preliminaries}

A graph \g is \defi{(vertex) transitive}, if for every two vertices $v,w$ there is an automorphism of \g mapping $v$ to $w$. The group of automorphisms of \g is denoted by $Aut(G)$. We say that $Aut(G)$ is \defi{discrete}, if the stabiliser of each vertex is finite.

A \defi{cover} from a graph $G$ to a graph $H$ is a map $c:V(G)\to V(H)$ such that the restriction of $c$ to the neighbourhood of any vertex of $G$ is a bijection.

\subsection{Planar graphs}

A \defi{plane graph} is a graph \g endowed with a fixed embedding in the plane $\R^2$; more formally, \g is a plane graph if $V(G)\subset \R^2$ and each edge $e\in E(G)$ is an arc between its two vertices that does not meet any other vertices or edges. A graph is \defi{planar} if it admits an embedding in $\R^2$. Note that a given planar graph can be isomorphic (in the graph-therotic sense) to various plane graphs that cannot necessarily be mapped onto each other via a homeomorphism of $\R^2$.

A {\em face} of a planar embedding is a component of the complement of its image, that is, a maximal connected subset of the plane to which no vertex or edge is mapped. The {\em boundary} of a face is the set of edges in its closure. 

\begin{lemma}[\cite{KroInf}] \label{kron}
Let \g be a vertex transitive plane 1-ended graph. Then every face-boundary of \g contains only finitely many edges.
\end{lemma}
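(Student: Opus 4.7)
The plan is to argue by contradiction: suppose a face $F_0$ of $G$ has infinitely many edges in its boundary, and work under the standing assumption that $G$ is locally finite (the non-locally-finite case would have to be handled separately, e.g.\ by first showing that vertex-transitive planar graphs of infinite valency cannot be 1-ended).

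The first technical step is to extract a double ray (two-way infinite simple path) $D \subseteq \partial F_0$. Since $\partial F_0$ is an infinite connected subgraph of the locally finite graph $G$, and each boundary vertex has at least two $\partial F_0$-edges incident to it (cyclically around $F_0$), the boundary walk around $F_0$ gives an infinite walk to which a König-style argument applies, yielding such a $D$.

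The crux would then be a Jordan-type argument on the sphere $S^2 = \R^2 \cup \{\infty\}$. Since $G$ is 1-ended, both of the rays comprising $D$ must converge in $S^2$ to the single accumulation point $\infty$; hence $D \cup \{\infty\}$ is a simple closed curve dividing $S^2$ into two open topological disks $U_1, U_2$. One of them, say $U_1$, is the face $F_0$ itself. The other disk $U_2$ must contain at least one vertex of $G$, since otherwise the whole graph would coincide with $D$, which is a double ray and thus has two ends, contradicting 1-endedness.

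Finally I would exploit vertex-transitivity. Choose $v \in U_2 \cap V(G)$ and an automorphism $\phi$ of $G$ with $\phi(d) = v$ for some $d \in V(D)$. In the 3-connected case, Whitney's uniqueness theorem ensures that $\phi$ permutes the face-boundaries of the given planar embedding, so $\phi(D)$ again sits on the boundary of an infinite face and gives rise to a second simple closed curve separating the sphere. Intersecting $D$ with finitely many such translates $\phi_i(D)$ would then yield a finite set of vertices whose removal leaves at least two infinite components of $G$, contradicting 1-endedness. The main obstacle I expect is the non-3-connected case, where the planar embedding is not rigid under graph automorphisms; handling this properly would require either a reduction through the block-cutvertex decomposition of $G$, or finer structural input on vertex-transitive plane graphs of low connectivity.
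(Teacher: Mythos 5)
The paper does not prove this lemma at all: it is imported verbatim from \cite{KroInf}, so there is no internal proof to compare against and you are in effect being asked to reprove a nontrivial external theorem. Judged on its own terms, your outline has a genuine gap precisely where the mathematical content of the lemma lives, namely the final step. You assert that intersecting $D$ with finitely many automorphic translates $\phi_i(D)$ ``would then yield a finite set of vertices whose removal leaves at least two infinite components''. Nothing in the sketch supports this: $D\cap\phi(D)$ can perfectly well be infinite, no mechanism is given that produces a \emph{finite} separator from these intersections, and no reason is given why the complement of such a separator should have two infinite components rather than one infinite and several finite ones. Deriving the contradiction with 1-endedness from the existence of an infinite face is exactly the hard part of Kr\"on's theorem, and it is skipped.

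There are also problems upstream. The boundary of a face of an infinite plane graph need not be connected and need not be a single double ray (the outer face of the ladder $\Z\times\{0,1\}$ has two boundary components), so ``the boundary walk around $F_0$'' is not a well-defined object and the claim that one of the two Jordan domains of $D\cup\{\infty\}$ \emph{is} $F_0$ is unjustified --- at best $F_0$ lies in one of them, with possibly more of $G$ on the same side. Moreover, the two rays of $D$ converge to $\infty$ in $S^2$ only if the given embedding has no accumulation points in $\R^2$; the lemma is stated for an arbitrary plane embedding, so you would need to invoke \cite{ThomassenRichter} (as the paper does elsewhere) or the uniqueness of embeddings of 3-connected graphs to reduce to that case. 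Ironically, the one obstacle you flag as the main worry --- low connectivity --- is the easiest to dispose of: by Babai--Watkins (Lemma~\ref{lem_BW} of the paper) a locally finite, transitive, 1-ended graph of degree $d\ge 3$ has connectivity at least $3(d+1)/4\ge 3$, and degree at most $2$ is incompatible with 1-endedness, so Whitney's theorem is always available. The non-locally-finite case you set aside is also a real omission, since the statement carries no local finiteness hypothesis.
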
 

This means that every face-boundary is a cycle of \g in our case.

\comment{
An embedding of $G$ is called {\em consistent} if, intuitively, it embeds every vertex in a similar way in the sense that the group action carries faces to faces. Let us make this more precise.
Given an embedding $\sigma$ of a graph $G$, we consider for every vertex $x$ the embedding of the edges incident with $x$, and define the {\em spin} of $x$ to be the cyclic order of the set $\{xy \mid y\in N(x)\}$ in which $xy_1$ is a successor of $xy_2$ whenever the edge $xy_2$ comes immediately after the edge $xy_1$ as we move clockwise around $x$. 

Call an automorphism $\alpha$ of $G$ {\em spin-preserving} if for every $x\in VG$ the spin of $x\alpha$ is the image of the spin of $x$ in $\sigma$. 
Call it {\em spin-reversing} if for every $x\in VG$ the spin of $x\alpha$ is the reverse of the image of the spin of $x$ in $\sigma$. Call an automorphism {\em consistent} if it is spin-preserving or spin-reversing in $\sigma$. Finally, call the embedding $\sigma$ {\em consistent} if every automorphism of $G$ is consistent in $\sigma$.
}

Given a planar embedding of a graph $G$, we define a \emph{facial path} to be a path of $G$ contained in the boundary of a face. We define a \emph{facial walk} similarly.

The following is a classical result, proved by Whitney \cite[Theorem 11]{whitney_congruent_1932} for finite graphs. It extends to infinite ones by compactness; see \cite{ImWhi}.
\begin{theorem} \label{imrcb}
Let $G$ be a 3-connected graph embedded in the sphere. Then every automorphism of $G$ maps each facial path to a facial path. 
\qed
\end{theorem}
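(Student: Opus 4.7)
The plan is to reduce the theorem to its classical finite version via a compactness argument, as carried out in \cite{ImWhi}. The central intermediate step is the infinite version of Whitney's uniqueness of embedding: any two embeddings $\sigma_1, \sigma_2$ of a 3-connected planar graph $G$ in $S^2$ differ by a self-homeomorphism of the sphere.

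For the uniqueness, I would exhaust $G$ by an increasing sequence of finite 3-connected subgraphs $G_1 \subseteq G_2 \subseteq \cdots$ with $\bigcup_n G_n = G$ (when $G$ is locally finite this is straightforward by boosting balls with a few additional paths; otherwise one must argue with the combinatorial rotation systems at each vertex directly). To each $G_n$, Whitney's classical theorem associates a self-homeomorphism $h_n$ of $S^2$ carrying $\sigma_1|_{G_n}$ to $\sigma_2|_{G_n}$, unique up to an overall reflection. A K\"onig's infinity lemma argument on the finite branching data at each level extracts a coherent family of rotation systems, yielding a limit homeomorphism $h$ with $h \circ \sigma_1 = \sigma_2$.

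Given uniqueness, the theorem follows at once. Let $\sigma$ be the embedding of $G$ and $\alpha \in \mathrm{Aut}(G)$; then $\sigma' := \sigma \circ \alpha$ is another embedding of $G$ in $S^2$, so there is a homeomorphism $h$ of $S^2$ with $h \circ \sigma = \sigma'$, and $h$ permutes the faces of the embedded graph $\sigma(G)$. For any facial path $\pi \subseteq \partial F$ of $\sigma$,
\[
\sigma(\alpha(\pi)) \;=\; (h \circ \sigma)(\pi) \;\subseteq\; h(\partial F) \;=\; \partial h(F),
\]
and $h(F)$ is again a face of $\sigma$, so $\alpha(\pi)$ is facial.

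The main obstacle is the compactness step in the uniqueness argument. Rather than working with the homeomorphisms $h_n$ themselves, which live in an unwieldy topological space, I would run the compactness argument on the purely combinatorial side, using rotation systems and Tutte's characterization of face boundaries as peripheral (non-separating induced) cycles. This reduces the task to a coherent choice among finitely many combinatorial embeddings of each $G_n$, where K\"onig's infinity lemma applies cleanly and a limit embedding can be extracted.
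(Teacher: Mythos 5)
There is a genuine gap: your central intermediate claim --- that any two embeddings $\sigma_1,\sigma_2$ of an infinite $3$-connected planar graph in $S^2$ differ by a self-homeomorphism of the sphere --- is false, and the final derivation rests on it. For infinite graphs, embeddings can differ in how the ends accumulate, and no ambient homeomorphism can relate them. Concretely, take the $3$-connected planar graph $\Z\times\{0,1,2\}$ with nearest-neighbour edges. In the standard embedding the top double ray bounds a face and the bottom double ray bounds another; in an embedding where one end spirals into a point, there is a single face whose boundary contains edges of both the top and the bottom rays. So not only is there no homeomorphism of $S^2$ carrying one embedding to the other --- even the collection of face boundaries, as edge sets, is not an invariant of the graph. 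This is precisely why the theorem is stated in terms of facial \emph{paths} rather than faces or their boundaries, and it means the step ``given uniqueness, the theorem follows at once'' cannot be run as written.

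What is actually true, and what the cited extension of Whitney's theorem establishes, is only the \emph{combinatorial} invariance: the set of finite facial paths (equivalently, the rotation system up to global reflection) is the same for all embeddings. Your closing paragraph, where you propose to work with rotation systems and peripheral cycles of finite $3$-connected approximations, is the right track, but it is exactly where all the work lies and it is left unaddressed: one must show (i) that a locally finite $3$-connected graph admits an exhaustion by finite $3$-connected subgraphs (this is a genuine lemma, not a routine ball-boosting --- balls of a $3$-connected graph need not be $3$-connected, and repairing them requires an iterative Menger argument or a different device), and (ii) the delicate converse direction that a path which is facial in every finite approximation is facial in $G$ itself, which requires a nested-faces/compactness argument and is the real content of the infinite case. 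Without (ii), knowing that the induced rotation systems of $\sigma$ and $\sigma\circ\alpha$ agree up to reflection does not yet yield that $\alpha$ maps facial paths to facial paths. The paper itself does not reprove this statement but quotes it (Whitney for finite graphs, extended by compactness in the cited reference); your sketch would need to be restructured around the combinatorial invariance of facial paths rather than ambient uniqueness to be salvageable.
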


The \emph{connectivity} of a graph is the cardinality of a smallest vertex set whose deletion disconnects the graph. A graph is \emph{$3$-connected} if its connectivity is at least 3.
The next result is due to Babai and Watkins \cite{BaWaCon}, see also~\cite[Lemma~2.4]{baGro}.

\begin{lemma}\label{lem_BW}{\em \cite[Theorem~1]{BaWaCon}}
Let $G$ be a locally finite connected transitive graph  that has precisely one end.
Let $d$ be the degree of any of its vertices.
Then the connectivity of $G$ is at least $3(d+1)/4$.
\qed
\end{lemma}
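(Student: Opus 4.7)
The plan is to proceed by contradiction: assume $\kappa < 3(d+1)/4$, where $\kappa$ is the connectivity of $G$, and derive a structural violation. Choose a minimum vertex cut $S \subseteq V(G)$ with $|S|=\kappa$. Since \g is 1-ended, at most one component of $G-S$ is infinite, so I can pick a component $A$ of $G-S$ that is finite and of minimum size; set $B := V(G) \setminus (S \cup A)$, so $B$ contains the unique (if any) infinite component. By minimality of $S$, each $v\in S$ has at least one neighbour in $A$ and at least one in $B$, hence $|N(v)\cap S|\leq d-2$, which already gives the crude bound $\kappa\leq d-1$ but not the claimed one.

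Next I would use vertex transitivity to overlap $S$ with a shifted copy of itself. Pick $u\in A$ and $v\in S$, and let $\alpha\in \Aut(G)$ satisfy $\alpha(u)=v$; set $S':=\alpha(S)$, $A':=\alpha(A)$, $B':=\alpha(B)$, so $v\in A'\cap S$ and the two cuts genuinely interact. The key tool is submodularity of vertex boundaries: whenever $X,Y\subseteq V(G)$ are such that both $X\cap Y$ and $V(G)\setminus(X\cup Y)$ are nonempty,
\[|\partial(X\cap Y)|+|\partial(X\cup Y)|\leq |\partial X|+|\partial Y|.\]
Applied to the four corners $A\cap A'$, $A\cap B'$, $B\cap A'$, $B\cap B'$, minimality of $\kappa$ forces every nontrivial boundary to equal $\kappa$ exactly. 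Here the 1-endedness hypothesis enters decisively: at most one of the four corners can be infinite, and it must be $B\cap B'$, so $A\cap A'$, $A\cap B'$ and $B\cap A'$ are all finite. This rules out a pairing of two simultaneously infinite corners, which is exactly the configuration that limits the corresponding argument without 1-endedness to Watkins's weaker $2(d+1)/3$ bound.

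Finally, I would extract a counting contradiction by summing degrees over the finite region $R := (A\cup A')\cup (S\cap S')$: on the one hand the sum is $d\cdot|R|$, on the other edges are either internal to $R$ or exit into the boundary $(S\cup S')\setminus(S\cap S')$, which has size $2\kappa-|S\cap S'|$. Combining with the lower bounds $|N(v)\cap A|,|N(v)\cap B|\geq 1$ for $v\in S$ (and their $\alpha$-images), and choosing $\alpha$ so as to minimise $|S\cap S'|$ subject to $\alpha(A)\cap (S\cup A)\ne \emptyset$, the inequality should simplify to $4\kappa\geq 3(d+1)$, contradicting the assumption. The hard part will be this last counting step: the argument with no control on infinite corners delivers only the $2(d+1)/3$ bound, and squeezing out the extra $1/12$ requires a careful case analysis of which of the three finite corners are empty, combined with a judicious choice of $\alpha$ that optimally balances $|S\cap S'|$ against the minimality of the atom $A$.
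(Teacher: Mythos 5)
The paper does not prove this lemma at all: it is imported verbatim from Babai and Watkins \cite[Theorem~1]{BaWaCon} and used as a black box (hence the tombstone with no argument), so there is no in-paper proof to compare yours against. Judged on its own merits, your proposal correctly identifies the standard circle of ideas behind connectivity bounds for transitive graphs: pass to a minimum cut $S$ and a minimal finite fragment $A$ (an atom), translate by an automorphism so that two copies of the cut cross, use submodularity of the vertex boundary to force all nontrivial corner boundaries to equal $\kappa$ exactly, and use $1$-endedness to guarantee that at most one corner (necessarily $B\cap B'$) is infinite. That is genuinely the right skeleton, and your remark about why $1$-endedness is what lifts the Watkins/Mader bound $2(d+1)/3$ to $3(d+1)/4$ is on target.

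However, the proof is not actually there: the entire quantitative content of the theorem sits in the final counting step, which you leave as ``the inequality should simplify to $4\kappa\ge 3(d+1)$'' and explicitly flag as the hard part. As written, summing degrees over $R=(A\cup A')\cup(S\cap S')$ and counting exits into $(S\cup S')\setminus(S\cap S')$ does not visibly produce the factor $3/4$, and no case analysis is supplied. The route that actually works is to bound the size of the atom: since every $v\in A$ has all $d$ of its neighbours in $(A\setminus\{v\})\cup N(A)$, one gets $d+1\le |A|+\kappa$, and the theorem follows once one shows $3|A|\le\kappa$ in the $1$-ended case (as opposed to $2|A|\le\kappa$ in general, which only yields $2(d+1)/3$); proving $3|A|\le\kappa$ is where the crossing-cuts analysis and the disjointness of translates of $A$ must be deployed in earnest, and this is precisely what your sketch omits. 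Two smaller points: the inference ``each $v\in S$ has a neighbour in $A$ and in $B$, hence $|N(v)\cap S|\le d-2$, hence $\kappa\le d-1$'' is a non sequitur ($|N(v)\cap S|$ bounds neighbours of $v$ inside $S$, not $|S|$ itself), though nothing downstream depends on it; and the submodularity step needs the hypothesis that both a corner and the complement of its closed neighbourhood are nonempty before its boundary can be declared a cut of size at least $\kappa$, which requires checking in the degenerate configurations.
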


We deduce from Lemma~\ref{lem_BW} and Theorem~\ref{imrcb} that for every 1-ended transitive planar graph, face-boundaries depend only on the graph and not on any embedding we might choose.

\subsection{Graphs that are locally planar}

Given a graph $H$ that is $r$-locally-$G$, where \g is planar, we would like to be able to talk about `face-boundaries' of $H$, although $H$ is not necessarily planar itself. This can be done by using the notion of a \defi{\pecy}. Recall that a cycle $C= v_0, v_1 \ldots, v_k=v_0$ of a graph $H$ is \defi{induced}, if \g contains no edge from $v_i$ to $v_j$ for $|i-j|>1 (mod\ k)$. A cycle  $C$ is \defi{peripheral} if it is both induced and non-separating. If \g is a connected plane graph then each \pecy\ bounds a face of \G. If \g is also 3-connected, then every face-boundary is peripheral.

A \defi{flag} of a plane graph \g is a triple $\{u,e,F\}$, consisting of a vertex $u$, an edge $e$, and a face-boundary $F$, such that $u\in e \in F$. We denote by $\cf=\cf(G)$ the set of all flags of \G.

Note that by Whitney's theorem, every automorphism of \g can be naturally extended to the flags of \G.

For a vertex $o$ of \G, the \defi{ball} $B_i(o;G)$ of radius $i$ ---also denoted by $B_i(o)$ if \g is fixed--- is the subgraph of \g induced by the vertices at graph-distance at most $i$ from $o$. As we are dealing with planar graphs, it is more convenient to consider the following variant: 
\begin{definition} \label{Di}
We let {\bf $D_k(o;G)$} denote $B_j(o;G)$ for the smallest $j\in \N \cup \{\infty\}$ such  that $B_j(o;G)$ contains every vertex $v\in V(G)$ for which \ti\ a sequence of \pecy s $C_1, \ldots, C_k$ with $o\in V(C_1), v\in V(C_k)$, and $C_i\cap C_{i+1} \neq \emptyset$ \fe\ relevant $i$.
\end{definition}
Note that if \g is planar, then every peripheral cycle bounds a face, and so $j$ is finite. In this case  $D_i(o;G)$ is a ball of \g large enough to contain the ball of radius $i$ of the dual of \G, but the definition also makes sense for non-planar graphs.

\begin{lemma}
Let \g be a vertex transitive plane 1-ended graph, and $o\in V(G)$. Then the face-boundaries of \g containing $o$ coincide with the \pecy s of $D_2(o)$.
\end{lemma}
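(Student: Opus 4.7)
The plan is to prove the equality as two inclusions, interpreting the statement as identifying the face-boundaries of $G$ through $o$ with the peripheral cycles of $D_2(o)$ through $o$. Throughout I use that $G$ is at least $3$-connected (Lemma~\ref{lem_BW}), hence, together with Lemma~\ref{kron} and Theorem~\ref{imrcb}, that every face-boundary of $G$ is an induced cycle determined intrinsically by the graph.

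For the forward inclusion, fix a face-boundary $F$ of $G$ through $o$; I would verify that $F$ is peripheral in $D_2(o)$. Inducedness of $F$ in $D_2(o)$ transfers from $G$ because $D_2(o)$ is an induced subgraph. The substance is to show $D_2(o)\setminus V(F)$ is connected. I would introduce the ``ring'' $R$ consisting of all face-boundaries of $G$ sharing at least a vertex with $F$; by Definition~\ref{Di} every such face-boundary lies in $D_2(o)$. At each $v\in V(F)$ the faces at $v$ other than the face bounded by $F$ form a cyclic fan of $\deg(v)-1$ faces, and two consecutive members of this fan share an edge whose far endpoint lies outside $V(F)$ (otherwise $F$ would have a chord, contradicting inducedness). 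Chaining these fans as one walks around $F$ exhibits $V(R)\setminus V(F)$ as a connected subgraph of $D_2(o)\setminus V(F)$. For any remaining $w\in D_2(o)\setminus V(F)$, a shortest $o$--$w$ path $P$ in $G$ lies entirely in the ball $D_2(o)$; its last vertex in $V(F)$ is immediately followed along $P$ by some neighbour of $V(F)$, which belongs to $R$, and the tail of $P$ from that neighbour to $w$ stays in $D_2(o)\setminus V(F)$, hooking $w$ onto the ring.

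For the reverse inclusion, let $C$ be a peripheral cycle of $D_2(o)$ through $o$ and suppose, for contradiction, that $C$ is not a face-boundary of $G$ at $o$. Since $D_2(o)$ is induced in $G$, the cycle $C$ is induced in $G$ as well. Its two edges at $o$ cut the cyclic list of faces $F_1,\ldots,F_d$ of $G$ at $o$ into two non-empty arcs, one on each side of $C$ in the planar embedding, and since $C$ is distinct from every $F_j$ each side supplies at least one $F_j$. On the inside of $C$, pick such an $F_j$: if $V(F_j)\subseteq V(C)$ then inducedness of $C$ forces every edge of $F_j$ to be an edge of $C$, whence $F_j=C$, a contradiction. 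Hence there is $w\in V(F_j)\setminus V(C)$, which lies strictly inside $C$, and $w\in V(D_1(o))\subseteq V(D_2(o))$. Symmetrically, a face on the outside of $C$ supplies $w'\in V(D_2(o))\setminus V(C)$ lying outside $C$.

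The step I expect to be most delicate is the final one, tying combinatorial peripherality of $C$ in $D_2(o)$ to planar separation: by the Jordan Curve Theorem applied to the embedding of $G$, any $w$--$w'$ path in $G$, and a fortiori any such path in $D_2(o)\setminus V(C)$, must cross $C$; so $w$ and $w'$ lie in distinct components of $D_2(o)\setminus V(C)$, contradicting non-separation. This forces $C$ to equal some $F_j$, completing the proof. The argument rests on just two ingredients, the topological input from Jordan and the combinatorial rigidity of face-boundaries supplied by Theorem~\ref{imrcb} and Lemma~\ref{kron}.
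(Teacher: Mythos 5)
Your proof is essentially correct and, for the forward inclusion, takes the same route as the paper: build a connected ``ring'' of faces around $F$ inside $D_2(o)$ and use it to reroute any path that meets $V(F)$. One step of your fan argument is incomplete, though: to conclude that $V(R)\setminus V(F)$ is connected it is not enough that consecutive faces of the fan share a vertex outside $V(F)$; you also need each individual $V(F')\setminus V(F)$, for $F'$ a ring face, to be connected. If a ring face met $F$ in two vertices that are not joined by a shared edge, the chain of linking vertices would break. This is ruled out by the standard fact that in a $3$-connected plane graph two distinct face-boundaries meet in at most one vertex or one edge (a Jordan-curve argument through the two face interiors produces a $2$-cut otherwise, contradicting Lemma~\ref{lem_BW}); the paper imports exactly this ingredient by citing an external lemma asserting that $D_1(v_i)\setminus F$ is a path. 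Since you announce $3$-connectivity as a standing hypothesis, this is a small omission rather than a wrong turn, but it should be made explicit. For the reverse inclusion your argument is a direct Jordan-curve proof that a non-face peripheral cycle of $D_2(o)$ would separate a vertex of $D_1(o)$ inside $C$ from one outside; the paper instead just invokes the remark that peripheral cycles of a connected plane graph bound faces. Your version is slightly longer but has the merit of landing directly on a face-boundary of $G$ (not merely of the subgraph $D_2(o)$), a distinction the paper's one-line appeal glosses over.
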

\begin{proof}
Let $F$ be a face-boundary incident with $o$. By \Lr{kron}, $F$ is a finite cycle $v_1(=o) v_2 \ldots v_k=v_1$. Clearly, $F$ is induced in $D_2(o)$; we will show it is non-separating. It is not hard to prove (see e.g.\ \cite[Lemma~1.1]{BrMoGro}) that $P_i:= D_1(v_i)\sm F$ is a path \fe\  $1\leq i \leq k$. It follows that $F':= \bigcup_{1\leq i \leq k} P_i$ is connected (in fact, it is a cycle). Moreover, $F'$ separates $F$ from $G \sm F$ by construction. To show that $F$ does not separate $D_2(o)$, notice that if $Q$ is a path with both its endvertices outside $F$ with $Q \cap F\neq \emptyset$, then $Q$ meets $F'$ and can be shortcut into a path with the same endvertices avoiding $F$. Thus $F$ is peripheral in  $D_2(o)$.

Conversely, let $F$ be a \pecy\ of $D_2(o)$ containing $o$. Then $F$ is a face-boundary in any embedding of $D_2(o)$ (or \G) as remarked above.
\end{proof}

This lemma justifies the following definition, which allows us to retain our intuition of faces in an $r$-locally-$G$ graph which is not necessarily planar. 
\begin{definition} \label{def1}
Let \g be a vertex transitive plane 1-ended graph, and $H$ a graph which is $r$-locally-$G$ for some $r\geq 2$. We define a \defi{face-boundary} of $H$ to be any \pecy\ of $D_2(v;H)$ incident with $v$ for any $v\in V(H)$. We extend the definition of  a flag, and that of a facial walk, to such graphs $H$ using this notion of face-boundary.
\end{definition}

\subsection{Automorphisms, flags, and fundamental domains}
\Tr{main} is easier to prove when the face-boundaries incident with one (and hence each) vertex have distinct sizes. Complications arise when this is not the case, especially when the automorphism group of \g has non-trivial vertex stabilizers. In order to deal with these complications, we adapt the standard notion of a fundamental domain to our planar setup as follows. We fix a vertex $o\in V(G)$, and define a \defi{fundamental domain} of \g to be a connected sequence of flags of $o$ containing exactly one flag from each orbit of $\Aut(G)$. Here, we say that a sequence $f_1, \ldots, f_k$ of flags of $o$ is \defi{connected}, if $f_i$ is incident with $f_{i+1}$ \fe\ $1\leq i<k$, and we say that $\{o,e,F\}$ is \defi{incident} to $\{o,e',F'\}$ if either $e=e'$ or $F=F'$. For $i\in\N$, we define an \defi{$i$-fundamental domain} of \g similarly except that we replace $\Aut(G)$ by $\Aut(D_i(o))$.

\section{Proof of \Tr{main}}
\begin{lemma}\label{lemnFD}	
There is $n\in\N$ \st\ every $n$-fundamental domain of \g is a fundamental domain.
\end{lemma}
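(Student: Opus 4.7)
The plan is to reduce the lemma to a statement about a descending chain of finite permutation groups, then recover a global automorphism as a limit by compactness. Let $\Phi$ denote the set of flags of $o$; by \Lr{kron} and local finiteness, $\Phi$ is finite. Any automorphism of $G$ (or of $D_n(o)$) that maps one flag at $o$ to another necessarily fixes $o$, so the $\Aut(G)$-orbits on $\Phi$ coincide with the orbits of the stabiliser $\Aut(G)_o$, and likewise for $\Aut(D_n(o))$. Writing $S_\infty$ and $S_n$ for the images of $\Aut(G)_o$ and $\Aut(D_n(o))_o$ respectively under the natural action homomorphism to $\mathrm{Sym}(\Phi)$, it therefore suffices to show that $S_n = S_\infty$ for all sufficiently large $n$.

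Once $n$ is large enough that $D_n(o)$ contains every face-boundary incident with $o$, one has a descending chain $S_\infty \subseteq \dots \subseteq S_{n+1} \subseteq S_n$ in $\mathrm{Sym}(\Phi)$. Indeed, any $\phi \in \Aut(D_{n+1}(o))_o$ preserves peripheral cycles through $o$ and their iterates in the sense of \Dr{Di}, hence preserves $D_n(o)$ and restricts to an element of $\Aut(D_n(o))_o$ inducing the same permutation on $\Phi$; the argument for $\phi \in \Aut(G)_o$ is identical. Since $\mathrm{Sym}(\Phi)$ is finite, this chain stabilises: there exists $N$ with $S_n = S_N =: S$ for every $n \geq N$, and clearly $S_\infty \subseteq S$.

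For the reverse inclusion, fix $\sigma \in S$ and for each $n \geq N$ choose $\phi_n \in \Aut(D_n(o))_o$ inducing $\sigma$ on $\Phi$. By \Dr{Di} the sets $V(D_n(o))$ exhaust $V(G)$, and since $\phi_n$ fixes $o$ while preserving $D_n(o)$-distance from $o$, for each $v \in V(G)$ the value $\phi_n(v)$ eventually lies in the finite ball $B_{d_G(o,v)}(o;G)$. A routine diagonal extraction produces a subsequence $(\phi_{n_k})$ such that $\phi_{n_k}(v)$ is eventually constant for every $v$; the pointwise limit $\phi$ is a bijection of $V(G)$ preserving edges and non-edges, hence an automorphism of $G$ fixing $o$ and inducing $\sigma$ on $\Phi$. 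Thus $\sigma \in S_\infty$, so $S = S_\infty$, and for every $n \geq N$ the $\Aut(D_n(o))$- and $\Aut(G)$-orbits on $\Phi$ agree; any $n$-fundamental domain is therefore also a fundamental domain. The main obstacle is this limiting step: the key input is that $D_n(o)$ exhausts $G$, so that each $\phi_n(v)$ is confined to a bounded finite set once $n$ is large, allowing the diagonal extraction to succeed.
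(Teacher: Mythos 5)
Your proof is correct and follows the same skeleton as the paper's argument --- the $\Aut(D_i(o))$-orbit partition of the finite set of flags at $o$ can only get finer as $i$ grows, and a monotone sequence of partitions of a set of size at most twice the degree must stabilise --- but you supply a step that the paper's two-sentence proof leaves entirely implicit. The paper only observes that the cardinality of an $i$-fundamental domain is monotone and bounded, hence attains a maximum; by itself this shows the partitions stabilise to some partition $P$, which is a priori only \emph{coarser} than the $\Aut(G)$-orbit partition, whereas the lemma needs the two to coincide. Your diagonal/compactness argument --- for a permutation $\sigma$ realised by some $\phi_n \in \Aut(D_n(o))_o$ for every large $n$, extract a pointwise-convergent subsequence whose limit is an automorphism of $G$ realising $\sigma$ --- is exactly the missing ingredient, and it works because the $D_n(o)$ exhaust $G$ and each $\phi_n(v)$ is confined to the finite ball $B_{d(o,v)}(o;G)$. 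Two small repairs: your justification that restriction maps $\Aut(D_{n+1}(o))_o$ into $\Aut(D_n(o))_o$ via ``preservation of peripheral cycles'' is shaky, since peripheral cycles of the finite graph $D_{n+1}(o)$ need not be peripheral cycles of $G$; it is cleaner to note that $D_n(o)$ is by definition a ball $B_j(o;G)$, that distances from $o$ inside such a ball agree with distances in $G$, and that an automorphism fixing $o$ preserves these, so it maps $B_j(o;G)$ to itself. Also, surjectivity of the pointwise limit deserves a line (apply the same confinement argument to $\phi_{n_k}^{-1}(w)$ and pass to a further subsequence). Neither point affects correctness.
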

\begin{proof}
The cardinality of an $i$-fundamental domains is monotone increasing with $i$ by the definitions. Since this size is bounded above by twice the degree of \G, a maximum is achieved for some $n$.
\end{proof}

From now on we fix a fundamental domain $\Delta$ of \G. We define a map $\phi: \cf(G) \to \Delta$ by letting $\phi(f)$ be the unique flag in $\Delta$ in the orbit of $f$ under $\Aut(G)$; the existence and uniqueness of such a flag follow from the transitivity of \g and the definition of a fundamental domain. 
By the \emph{colour} of a flag $f$ we will mean the flag $\phi(f)$ of $\Delta$.

Our next observation is that a similar map can be defined on the flags of any $r$-locally-$G$ graph for $r$ at least as large as the $n$ of \Lr{lemnFD}:

\begin{lemma}\label{phipi}	
Let $n\in\N$ be \st\ every $n$-fundamental domain of \g is a fundamental domain, and let $H$ be an $n$-locally-$G$ graph. Then \fe\ $x\in V(H)$, and every two isomophisms $\pi,\pi': D_n(x;H)\to D_n(o;G)$, the compositions $\phi \pi, \phi \pi'$ coincide.
\end{lemma}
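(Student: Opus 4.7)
The plan is to reduce the statement to a claim about automorphisms of $D_n(o;G)$ and then extract the orbit-coincidence which is encoded in the hypothesis on $n$ via \Lr{lemnFD}.

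First I would set $\alpha := \pi' \circ \pi^{-1}$. Since each of $\pi,\pi'$ is a (rooted) isomorphism $D_n(x;H) \to D_n(o;G)$ sending $x$ to $o$, the map $\alpha$ lies in $\Aut(D_n(o;G))_o$, the stabiliser of $o$ in $\Aut(D_n(o;G))$. Hence for any flag $f$ of $D_n(x;H)$ incident with $x$, writing $g := \pi(f)$ gives a flag at $o$ in $G$ (using the previous lemma, the face-boundaries at $o$ in $G$ coincide with the peripheral cycles of $D_2(o)$, so flags at $o$ are the same object in $G$ and in $D_n(o;G)$), and then $\pi'(f) = \alpha(g)$.

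Second, I would unpack the defining property of $n$. Every automorphism of $G$ fixing $o$ restricts to an automorphism of $D_n(o;G)$ fixing $o$, so $\Aut(G)_o$-orbits on flags at $o$ refine $\Aut(D_n(o;G))_o$-orbits; this is exactly why the size of an $i$-fundamental domain is monotone and bounded above by the size of a fundamental domain in \Lr{lemnFD}. The assumption that every $n$-fundamental domain is a fundamental domain forces the two orbit structures on flags at $o$ to coincide: otherwise a $\Aut(D_n(o;G))_o$-orbit would split into several $\Aut(G)_o$-orbits, and picking one representative per $\Aut(D_n(o;G))_o$-orbit would yield an $n$-fundamental domain missing some $\Aut(G)_o$-orbit. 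Since $\alpha \in \Aut(D_n(o;G))_o$, $g$ and $\alpha(g)$ lie in a common $\Aut(D_n(o;G))_o$-orbit, hence in a common $\Aut(G)_o$-orbit, and so $\phi(g)=\phi(\alpha(g))$. Applied to $g=\pi(f)$ this gives $\phi(\pi(f))=\phi(\pi'(f))$, which is the central case of the lemma.

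To extend the argument to a flag $f$ incident with a vertex $y\neq x$ in $D_n(x;H)$, I would use vertex-transitivity to translate everything back to $o$: pick $\beta\in\Aut(G)$ with $\beta(\pi(y))=o$ and $\beta'\in\Aut(G)$ with $\beta'(\pi'(y))=o$. The composite $\delta:=\beta'\circ\alpha\circ\beta^{-1}$, suitably restricted to a sub-ball around $o$, is a graph automorphism of that sub-ball fixing $o$, and it carries $\beta(\pi(f))$ (a flag at $o$) to $\beta'(\pi'(f))$ (also a flag at $o$). Applying the central case to $\delta$ yields $\phi(\beta\pi(f))=\phi(\beta'\pi'(f))$, and then $\Aut(G)$-invariance of $\phi$ gives $\phi(\pi(f))=\phi(\pi'(f))$.

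The main obstacle I anticipate is the technical bookkeeping in this last step: $\delta$ is only automatically defined on a sub-ball of $D_n(o;G)$ whose radius shrinks with the distance from $y$ to $x$, so one must choose $n$ large enough that this sub-ball still contains a ball satisfying the hypothesis of \Lr{lemnFD}. This relies on the observation that the property ``every $i$-fundamental domain is a fundamental domain'' persists for all $i$ larger than the one produced by \Lr{lemnFD}, which follows from the same monotonicity argument used in the proof of \Lr{lemnFD}. Once that is in place, the reduction is clean and purely combinatorial.
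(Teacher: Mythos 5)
Your first two paragraphs are exactly the paper's argument: form $\alpha=\pi'\pi^{-1}\in\Aut(D_n(o;G))$ and use the hypothesis that every $n$-fundamental domain is a fundamental domain to conclude that the $\Aut(D_n(o))$-orbits and the $\Aut(G)$-orbits of flags at $o$ coincide. The paper compresses this into a one-line ``contradicts the fact that $\Delta$ is an $n$-fundamental domain''; your counting argument (a proper refinement of the orbit partition would force an $n$-fundamental domain to have too few flags to be a fundamental domain) is the right way to fill that in. So for flags incident with the centre $x$ --- which is the only case the paper ever uses, e.g.\ to make $\phi_H$ well defined and in the proof of Lemma~\ref{colours} --- your proof is complete and agrees with the paper's.

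Your third paragraph is where you part company with the paper, and that step does not close as written. For a flag at a vertex $y\neq x$, the conjugate $\delta=\beta'\alpha\beta^{-1}$ is only defined on a ball around $o$ of radius roughly $n$ minus the distance from $o$ to $\pi(y)$, and that distance can be comparable to $n$ itself; the deficit therefore scales with $n$ and cannot be absorbed by ``choosing $n$ large enough'', especially since $n$ is fixed by the hypothesis of the lemma rather than at our disposal. Fortunately this case is never needed: the lemma is only invoked for flags at the centre of the ball, and the paper's own proof, read literally, likewise only covers that case (its contradiction with $\Delta$ being an $n$-fundamental domain only makes sense for flags at $o$). The clean fix is to restrict the statement to flags incident with $x$ and drop your last step.
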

\begin{proof}
Suppose, to the contrary, that $\phi \pi(f) \neq \phi \pi'(f)$ for some $f\in \cf(H)$. Then letting $g:=\pi(f)$, we have $\phi \pi \pi^{-1}(g) \neq \phi(g)$. But as $\pi \pi^{-1}\in \Aut(D_n(o;G))$, this contradicts the fact that $\Delta$ is an $n$-fundamental domain of \G, which holds by \Lr{lemnFD}.
\end{proof}

This allows us to define a map $\phi_H: \cf(H) \to \Delta$ by letting $\phi_H(f)$ be the unique flag in $\Delta$ that equals  $\phi \pi (f)$ for some isomophism $\pi: D_n(x;H)\to D_n(o;G)$. Again, the \emph{colour} of a flag $h$ of $H$ is $\phi_H(h)\in \cf(\Delta)$.

\medskip
We let $r:=n+1$ for the rest of this section. 

\begin{lemma}\label{extend}	
Let $H$ be an $r$-locally-$G$ graph, let $v\in V(G), x\in V(H)$. Let $f$ be a flag of $v$ in \g and $h$ a flag of $x$ in $H$ \st\ $\phi(f)=\phi_H(h)$. Then \ti\ a unique isomorphism $i$ from $D_r(v;G)$ to $D_r(x;H)$ \st\ $i(f)=h$.
\end{lemma}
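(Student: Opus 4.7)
The plan is to build $i$ as a composition of a local isomorphism (supplied by the $r$-locally-$G$ hypothesis) with a suitable automorphism of $G$, and to deduce uniqueness from the rigidity of $3$-connected planar graphs.

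\textbf{Existence.} First I would pick any isomorphism $\sigma\colon D_r(x;H)\to D_r(o;G)$ with $\sigma(x)=o$; such a $\sigma$ exists because $H$ is $r$-locally-$G$ and (by \Lr{kron}) every $D_r$ ball is contained in a ball $B_{r'}$ for some finite $r'$, so $r$ can be taken large enough. By the definition of $\phi_H$ given after \Lr{phipi}, we have $\phi(\sigma(h))=\phi_H(h)$, and by hypothesis this equals $\phi(f)$. Hence $\sigma(h)$ and $f$ lie in the same $\Aut(G)$-orbit. Choosing $\beta\in\Aut(G)$ with $\beta(\sigma(h))=f$, and observing that $\sigma(h)$ is a flag at $o$ while $f$ is a flag at $v$, forces $\beta(o)=v$. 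Consequently $\beta\sigma\colon D_r(x;H)\to D_r(v;G)$ is a graph isomorphism carrying $h$ to $f$, and $i:=(\beta\sigma)^{-1}$ is the required map.

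\textbf{Uniqueness.} Given two candidates $i_1,i_2$, the composition $\alpha:=i_2^{-1}\circ i_1$ is an automorphism of $D_r(v;G)$ fixing the flag $f=\{v,e,F\}$. I would argue $\alpha$ is the identity by outward propagation from $v$. By \Lr{lem_BW} the graph $G$ is $3$-connected, so by \Tr{imrcb} (Whitney) its embedding is essentially unique and every automorphism preserves face-boundaries; in particular the spin at any vertex is preserved up to reversal. Fixing a full incidence triple $\{v,e,F\}$ pins down both the distinguished edge at $v$ and the side of the spin on which $F$ lies, so $\alpha$ preserves (rather than reverses) the spin at $v$ and therefore fixes every edge at $v$. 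Walking along the face-boundary $F$ and along each newly fixed edge, $\alpha$ also fixes the flags at the neighbours of $v$; iterating across successive shells of $D_r(v;G)$ yields $\alpha=\mathrm{id}$.

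\textbf{Main obstacle.} The existence direction is almost formal once $\phi_H$ is available. The delicate point is propagation in the uniqueness step: one must verify that, within the finite subgraph $D_r(v;G)$ (which need not itself be $3$-connected and need not carry all face-boundaries of $G$), fixing a single flag still forces the identity, i.e.\ that the chain of face-boundaries and edges used to spread the argument always stays inside $D_r(v;G)$ and covers every vertex. This requires care with the peripheral-cycle structure built into \Dr{Di}, and it is here that the hypothesis $r=n+1$ (rather than just $r=n$) is used, to guarantee enough overlap of adjacent face-boundaries to continue the induction.
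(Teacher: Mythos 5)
Your existence argument is essentially the paper's: both constructions obtain $i$ by composing an isomorphism between $D_r$-balls (supplied by the $r$-locally-$G$ hypothesis) with an automorphism of $G$ chosen, via $\phi$ and the fundamental domain $\Delta$, to match up the two flags. If anything your version is the more careful of the two: the paper picks $a\in\Aut(G)$ with $a(f)\in\Delta$ and then tacitly needs the local isomorphism $\pi$ to send $\phi_H(h)$ to $h$ itself rather than merely to some flag in its $\Aut(D_r(o;G))$-orbit, whereas your choice of $\beta$ with $\beta(\sigma(h))=f$ sidesteps that issue. So the existence half is fine.

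The uniqueness half is where your proof has a genuine gap, and you have in effect conceded it yourself. The map $\alpha=i_2^{-1}\circ i_1$ is an automorphism of the finite graph $D_r(v;G)$, not of $G$, so neither \Lr{lem_BW} nor Whitney's theorem (\Tr{imrcb}) applies to it as stated: $D_r(v;G)$ need not be $3$-connected, and an automorphism of it need not carry face-boundaries of $G$ to face-boundaries of $G$ once one is near the boundary of the ball, which is precisely where your shell-by-shell propagation has to keep running. The claim that the hypothesis $r=n+1$ is what rescues this is also not substantiated: that offset is introduced in the paper only to make \Lr{colours} work, and nothing in the definitions of $n$ or $\Delta$ (which control \emph{orbits} of flags under $\Aut(D_n(o))$, not their \emph{stabilisers}) directly forces an automorphism of $D_r(v;G)$ fixing one flag to be the identity. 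For what it is worth, the paper's own proof of this lemma only constructs $i$ and is entirely silent on uniqueness, so you have correctly isolated the one delicate point of the statement --- but your sketch identifies the obstacle rather than overcoming it, and as a proof of the uniqueness assertion it is incomplete.
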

\begin{proof}
Let $\pi: D_r(o;G)  \to D_r(x;H)$ be an isomophism, which exists by the definition of $r$-locally-$G$. Then $\phi\pi(h) = \phi(f)$ by \Lr{phipi}. 

By the definition of \Del\ and \Lr{lemnFD}, \ti\ an automorphism $a$ of \g mapping $v$ to $o$ with $a(f)\in \Del$. Let $a': D_r(v)\to D_r(o)$ be the restriction of $a$ to $D_r(v)$. Then the composition $\pi a'$ is the desired isomorphism  from $D_r(v;G)$ to $D_r(x;H)$.
\end{proof}

\begin{lemma}\label{colours}	
Let $H$ be an $r$-locally-$G$ graph, and let $c$ be an isomorphism from a face-boundary $F$ of $G$ to a a face-boundary of $H$ (recall Definition~\ref{def1}). Suppose that for some flag $f$ of $F$, we have $\phi(f)=\phi_H(c(f))$. Then  \fe\ flag $f'$ of $F$, we have $\phi(f')=\phi_H(c(f'))$.
\end{lemma}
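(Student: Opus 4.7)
The plan is to upgrade the colour-agreement at the single flag $f$ into a graph isomorphism of $r$-balls and then transport colours along it using vertex-transitivity of $G$. Concretely, I would invoke Lemma~\ref{extend} at $f$ (whose hypothesis holds by assumption) to obtain the unique isomorphism $i: D_r(v;G) \to D_r(x;H)$ with $i(f) = c(f)$, where $v$ is the vertex of $f$ and $x:=c(v)$. Since $F\subseteq D_2(v;G)\subseteq D_r(v;G)$, I may restrict $i$ to $F$; both $i|_F$ and $c|_F$ are then cycle-isomorphisms $F\to c(F)$ agreeing on $v$ and on the incident edge of $f$, and a cycle-isomorphism is determined by its effect on one directed edge, so $i|_F=c|_F$. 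In particular $i(f')=c(f')$ for every flag $f'$ of $F$.

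Now fix an arbitrary flag $f'$ of $F$ with vertex $v'$, and set $x':=c(v')=i(v')$. By Lemma~\ref{phipi} it suffices to exhibit a single isomorphism $\pi: D_n(x';H)\to D_n(o;G)$ and verify that $\phi(\pi(c(f')))=\phi(f')$. The key geometric input is the containment $D_n(v';G)\subseteq D_r(v;G)$: any chain $C_1,\dots,C_n$ of $n$ peripheral cycles with $v'\in C_1$ may be prepended with $F$ (which contains both $v$ and $v'$, so $v'\in F\cap C_1\ne\emptyset$), yielding a chain of $r=n+1$ peripheral cycles starting at $v$. The same argument applied in $H$ gives $D_n(x';H)\subseteq D_r(x;H)$, and because $i$ is a graph isomorphism of the ambient $r$-balls, it restricts to an isomorphism $D_n(v';G)\to D_n(x';H)$. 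Picking $a\in\Aut(G)$ with $a(v')=o$ and setting $\pi:=a\circ(i|_{D_n(v';G)})^{-1}$ then gives $\pi(c(f'))=a(i^{-1}(i(f')))=a(f')$, so
\[
\phi_H(c(f'))=\phi(\pi(c(f')))=\phi(a(f'))=\phi(f'),
\]
the last equality since $a\in\Aut(G)$ preserves $\Aut(G)$-orbits and hence $\phi$.

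The step I expect to be the main obstacle is ensuring that $i$ really does restrict to an isomorphism between the $D_n$-balls rather than merely between the ambient $D_r$-balls: the peripheral cycles used to define $D_n(v';G)$ must all lie inside $D_r(v;G)$ and correspond under $i$ to those defining $D_n(x';H)$. This is precisely what the face-prepending argument secures, and it is why the paper chooses $r=n+1$; the remainder of the proof is then routine composition chasing.
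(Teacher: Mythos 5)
Your proposal is correct and follows essentially the same route as the paper: invoke Lemma~\ref{extend} to get the isomorphism $i$ of $r$-balls agreeing with $c$ on $f$, observe that $i$ extends $c$ on all of $F$, note that $D_n(v')\subseteq D_r(v)$ because $v'$ and $v$ share the face $F$ (this is exactly why $r=n+1$), and conclude via Lemma~\ref{phipi}. The only difference is that you spell out the composition $\pi=a\circ(i|_{D_n(v')})^{-1}$ explicitly where the paper compresses this into a one-line appeal to Lemma~\ref{phipi}; the substance is identical.
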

\begin{proof}
By \Lr{extend}, \ti\ an isomorphism $i$ from $D_r(v)$ to $D_r(c(v))$ with $i(f)=c(f)$. In particular, $i(F)=c(F)$ and $i$ extends $c$. Given any $x\in V(F)$, let $i'$ denote the restriction of $i$ to $D_n(x)$, recalling that $n=r-1$ and $n$ satisfies the condition of \Lr{phipi}. As $x$ and $v$ lie on a common face $F$, we have $D_n(x)\subseteq D_r(v)$ and so $i'$ is an isomorphism from $D_n(x)$ to $D_n(i(x))$. By \Lr{phipi} the colour of any flag $g=\{x,e,F\}$ coincides with the colour of $i'(g)$. As $i'(g)=i(g)=c(g)$ (recall $i$ extends $c$), our claim follows.
 
\end{proof}

We can now prove our main result, which strengthens \Tr{main}.

\begin{lemma}\label{Lmain}	
Let $H$ be an $r$-locally-$G$ graph. Let $f=\{v,e,F\}, h=\{x,e',F'\}$ be flags of $G,H$ respectively, \st\ $\phi(f)=\phi_H(h)$. Then \ti\ a unique cover $c$ from $G$ to $H$ \st\ $c(f)=h$. This cover is normal.
\end{lemma}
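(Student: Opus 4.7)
My plan is to construct $c$ by propagating the local isomorphism furnished by \Lr{extend} along walks from $v$, and to establish well-definedness using the simple connectedness of the plane. First, apply \Lr{extend} to the flags $f,h$ to obtain an isomorphism $i_v\colon D_r(v;G)\to D_r(x;H)$ with $i_v(f)=h$, and set $c:=i_v$ on $D_r(v;G)$. Given a vertex $u$ at which $c$ is already defined through some isomorphism $i_{u'}$ centred at a neighbour $u'$, I pick any flag $f_u$ of $u$ lying inside $D_r(u';G)$ and set $h_u:=i_{u'}(f_u)$. A consequence of \Lr{phipi} is that any isomorphism between $D_r$-balls automatically preserves colours, so $\phi(f_u)=\phi_H(h_u)$, and \Lr{extend} supplies an isomorphism $i_u\colon D_r(u;G)\to D_r(c(u);H)$ with $i_u(f_u)=h_u$. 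Iterating extends $c$ to all of $V(G)$.

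I expect the heart of the proof to be verifying well-definedness: different walks from $v$ to the same vertex $u$ must yield the same value $c(u)$. Equivalently, the lift of every closed walk based at $v$ must close up in $H$. By \Lr{kron} every face-boundary of \g is a finite cycle, and attaching a $2$-cell along each face-boundary of the plane embedding turns \g into a $2$-complex homeomorphic to $\R^2$, hence simply connected. Therefore every closed walk in \g is a product of conjugates of face-boundaries in the fundamental group, so it suffices to show that the lift of each face-boundary $F$ closes up. But any $i_u$ with $u\in V(F)$ is an isomorphism on the entire $D_r(u;G)$ and hence preserves peripheral cycles, so it maps $F$ to a face-boundary of $H$ in the sense of \Dr{def1} (\Lr{colours} ensures the flag colours agree all the way around $F$); lifting $F$ using $i_u$ then traces out this image cycle and returns to its starting vertex. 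Once well-definedness is in hand, $c$ is automatically a cover, since it coincides locally with the isomorphism $i_u$ on each $D_r(u;G)$.

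For uniqueness, given a second cover $c'$ with $c'(f)=h$, I would proceed by induction on distance from $v$: at each vertex $u$ the specification of a single flag image, together with the fact that a cover between an $r$-locally-$G$ pair is a local isomorphism on $r$-balls once $r$ is sufficiently large, forces $c'$ to coincide on $D_r(u;G)$ with the $i_u$ produced by \Lr{extend}, and hence to agree with $c$ on every neighbour of $u$. Propagating gives $c'=c$. Finally, for normality, suppose $c(v')=c(w')$. Choose any flag $f_{v'}$ of $v'$, set $h^\ast:=c(f_{v'})$, and pick the flag $f_{w'}$ of $w'$ with $c(f_{w'})=h^\ast$ supplied by the cover property at $w'$. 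Since $c$ is built from colour-preserving isomorphisms, $\phi(f_{v'})=\phi_H(h^\ast)=\phi(f_{w'})$, so by transitivity of $\Aut(G)$ together with the defining property of the fundamental domain $\Delta$, there exists $\alpha\in\Aut(G)$ with $\alpha(f_{w'})=f_{v'}$. Then $c\circ\alpha^{-1}$ is a cover sending $f_{v'}$ to $c(f_{w'})=h^\ast=c(f_{v'})$, so by the uniqueness just proved $c\circ\alpha^{-1}=c$; together with $\alpha^{-1}(v')=w'$ this witnesses normality.
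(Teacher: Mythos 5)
Your overall strategy is genuinely different from the paper's: you build a developing map by propagating the chart from \Lr{extend} along walks and appeal to simple connectedness of the $2$-complex obtained by attaching discs to the face-boundaries, whereas the paper grows a combinatorial disc one face at a time, maintaining explicit invariants (colour preservation, injectivity on edge-neighbourhoods, and the condition that each mapped edge has a face-boundary mapped injectively to a face-boundary of $H$). Your reduction of well-definedness to ``lifts of face-boundaries close up'' is the right skeleton for such an argument, and the uniqueness and normality parts are essentially sound (the normality argument is the same as the paper's).

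However, there is a genuine gap at the central step. Your propagation defines $i_u$ as the unique \Lr{extend}-isomorphism sending a \emph{chosen} flag $f_u$ to $i_{u'}(f_u)$, and everything afterwards tacitly assumes that $i_u$ and $i_{u'}$ agree on the whole overlap of their domains. This is never proved, and it does not follow from \Lr{phipi} or from colour preservation alone: two colour-preserving isomorphisms into $D_r(c(u);H)$ that agree on one flag need not agree elsewhere a priori, since distinct flags of a vertex of $H$ can carry the same colour when the vertex stabiliser in $\Aut(G)$ is nontrivial. Without this compatibility, (a) the propagation is not even well defined along a single walk, because $i_u$ may depend on the arbitrary choice of $f_u$ among the flags in $D_r(u';G)$, and (b) your key assertion that the lift of a face-boundary $F$ closes up does not follow from ``$i_u$ maps $F$ to a face-boundary of $H$'': the lift of $F$ is traced out by the \emph{successive} charts $i_{u_0},i_{u_1},\dots$ at the vertices of $F$, and it returns to its starting point only if all of these agree with $i_{u_0}$ on $F$ --- which is again the missing compatibility statement. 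Establishing it amounts to showing that agreement on one flag of a shared vertex propagates to all flags of that vertex and then to neighbouring vertices; this is exactly the content of the paper's invariants \ref{cii}--\ref{ciii} and its observation \eqref{obs} that facial walks of length $3$ are mapped to facial walks. In other words, the technical heart of the proof has been asserted rather than proved. (A smaller point in the same vein: ``any isomorphism between $D_r$-balls automatically preserves colours'' needs $D_n(y)\subseteq D_r(u)$ for the relevant vertices $y$, which the paper only verifies for $y$ on a common face with $u$ in \Lr{colours}; for your purposes adjacent vertices suffice, but this should be said.)
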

\begin{proof}
We are going to construct the cover $c$ inductively, starting with the face $F$ of $f$ and then mapping the surrounding faces one by one.

The first step is straightforward: we set $c_0(v)= x$, and let $c_0$ map the remaining vertices of $F$ to $F'$ in the right order, so that $c_0(f)= h$. We remark that $c_0$ preserves colours of flags by \Lr{colours} since it does so for $f$.

For the inductive step, we let $C_0$ be the cycle bounding $F$, and for $i=1,2,\ldots$ we assume that $C_{i-1}$ is a cycle in \g and that we have already defined a map $c_{i-1}$ from  the intersection of \g with the inside of $C$ to $H$ in such a way that the following conditions are all satisfied:
\begin{enumerate}
\item \label{ci} $c_{i-1}$ preserves colours;
\item \label{cii} the restriction of $c_{i-1}$ to $E(v)$ is injective \fe\ $v\in V(G)$, and if $e,e'\in E(v)$ lie in a common face-boundary, then so do $c_{i-1}(e),c_{i-1}(e')$ (in other words, $c_{i-1}$ preserves the cyclic ordering of the edges around any vertex); and
\item \label{ciii} \fe\ edge $e$ in the domain of $c_{i-1}$ (by which we mean that both endvertices of $e$ are in the domain), some face-boundary of $G$ containing $e$ is mapped by  $c_{i-1}$ injectively to a  face-boundary of $H$.
\end{enumerate}

We are going to obtain the cycle $C_{i}$ from $C_{i-1}$ by attaching an incident face-boundary $F_i$. To make sure that every face is mapped at some point, we can fix an enumeration \seq{D} of the face-boundaries of \G. Then, at step $i$ we consider the minimum $n$ such that $D_n$ shares one or more edges with $C_{i-1}$ but does not lie inside $C_{i-1}$, and moreover, $D_n \cap C_{i-1}$ is a path, and let $F_i$ be this $D_n$. 
To see that $F_i$ is well-defined, note that if some $D_j$ satisfies all above requirements except the last one then,  $D_j \cup C_{i-1}$ bounds a region $A$ of $\R^2$ containing only finitely many faces; this is true because every 1-ended planar graph admits an embedding in the plane without accummulation points of vertices \cite{ThomassenRichter}. Each one of these faces $D$ is a candidate for $F_i$, and for those that also fail the requirement that $D \cap C_{i-1}$ is a path, there is a corresponding region $A_D$ strictly contained in $A$. As there are only finitely many such candidates, it is easy to see that at least one of them satisfies all above requirements, and we can choose it as $F_i$. This argument also easily implies that each $D_j$ will be chosen as $F_i$ at some step $i$.

Since $F_i \cap C_{i-1}$ is a path, and it contains an edge, $F_i \sydi C_{i-1}$ is a cycle, which we declare to be $C_{i}$. It remains to extend $c_{i-1}$ to $c_i$ by mapping $F_i \sm C_{i-1}$ to $H$ in a way that preserves flag colours.

Let $w$ be an end-vertex of the path $P:= F_i \cap C_{i-1}$
We claim that there is a unique face-boundary $B$ of $H$ incident with $c_{i-1}(w)$ \st\ (I) $c_{i-1}(P)\subseteq B$, (II) \ti\ an edge of $E(c_{i-1}(w)) \cap B$ not in $c_{i-1}(E(w))$, and (III) $|B|=|F_i|$. To prove this, we will make use of the following observation
\labtequ{obs}{$c_{i-1}$ maps every facial walk $W$ of length 3 in its domain to a facial walk.}
Indeed, let $d, m, g$ be the three edges appearing in $W$ in that order, and let $u,v$ be the endvertices of $m$ incident with $d,g$ respectively. Let $d',g'$ be the other two edges that lie in a common face-boundary with $m$ and are incident with $u,v$ respectively. Note that $m$ is the middle edge of exactly two facial walks of length 3, namely $W$ and $d' m g'$. 

Recall that $c_{i-1}$ preserves adjacency of edges by \ref{cii}, hence the images of $d,d',g,g'$ participate in the 2 facial walks of length 3 in $H$ having $c_{i-1}(m)$ as the middle edge. By \ref{ciii} we know that at least one of the walks $c_{i-1}(d) c_{i-1}(m) c_{i-1}(g)$ and $c_{i-1}(d') c_{i-1}(m) c_{i-1}(g')$ is mapped to a facial walk, and hence, if $d',g'$ are also in the domain of $c_{i-1}$, so is the other by the last remark. This proves \eqref{obs}.
\medskip

From \eqref{obs} we can deduce that $c_{i-1}$ maps every facial walk $W= e_1 e_2 \ldots e_k$, no matter how long, to a facial walk. Indeed, any pair of consequtive edges $e_i e_{i+1}$ in $W$ uniquely determines a face $K_i$ of $H$ containing $c_{i-1}(e_i) c_{i-1}(e_{i+1})$ by \ref{cii} and the fact that $e_i e_{i+1}$ is facial in $G$. But by \eqref{obs}, $K_i=K_{i+1}$ \fe\ relevant $i$ because $c_{i-1}(e_i) c_{i-1}(e_{i+1}) c_{i-1}(e_{i+2})$ is facial.

Applying this to our path $P$, we deduce that $c_{i-1}(P)$ is facial, and we choose $B$ to be the face-boundary it belongs to and, if there is a choice (which only occurs when $P$ is a single edge), contains an edge not in $c_{i-1}(w)$. This automatically satisfies (I) and (II). 

To see that (III) is also satisfied, consider the flag $g:=\{w,e,F_i\}$, where $e$ is the edge of $w$ contained in $F_i \cap C_{i-1}$. This flag is incident with the other flag $g':=\{w,e,D\}$ containing $w$ and $e$, where $D$ lies inside $C_{i-1}$ and is therefore in the domain of $c_{i-1}$. Let $j$ denote the flag $c_{i-1}(g')$, and note that $j$ is incident with $c_i(g)$ along the edge $c_{i-1}(e)$. Now as $c_{i-1}$ preserves colours by \ref{ci}, and the colour of any flag is uniquely determined by the colour of any of its incident flags by our definition of colour, this implies that the colour of $g$ coincides with the colour of the flag $\{c_{i-1}(w), c_{i-1}(e), B\}$ of $H$. In particular, we have $|B|=|F_i|$ by our definition of colour. This completes the proof of our claim.

\medskip
We now obtain $c_i$ by extending $c_{i-1}$ in such a way that $c_i(F_i)=B$. Note that there is a unique such extension as $c_{i-1}$ already maps a non-trivial subpath of $F_i$ to $B$.

By our last remark, $c_i$ preserves the colour of the flag $g$. By \Lr{colours}, $c_i$ preserves the colours of all flags of $F_i$, and as it extends $c_{i-1}$, our inductive hypothesis \ref{ci} that all flag colours are preserved is satisfied.  Condition (II) in the choice of $B$ ensures that \ref{cii} is also satisfied. Finally, \ref{ciii} is satisfied by the construction of $c_i$. 

Thus our inductive hypothesis is preserved.
Letting $c := \bigcup_i c_i$ we obtain a map from $V(G)$ to $V(H)$, which is a cover by \ref{cii}.

\medskip
To see that $c$ is unique, note that $c_0$ was uniquely determined by $f,h$, and at each step $i$, the map $c_i$ was the unique way to extend $c_{i-1}$ while keeping it a candidate for being the restriction of a cover because $B$ was uniquely determined by $F_i$ and $c_{i-1}$.

This uniqueness combined with the definition of $\Delta$ easily implies that $c$ is normal. Indeed, Suppose $c(v)=c(w)=x$ for $v,w\in V(G)$. Let $h$ be a flag of $x$, and let $f_v,f_w$ be the flags of $v,w$ respectively such that $c(f_v)=h= c(f_w)$. Then $\phi(f_v)=\phi_H(h)=\phi(f_w)$. Therefore, there is an automorphism $\alpha$ of $G$ such that $\alpha(v)=w$ and $\alpha(f_v)=f_w$. Note that $c \circ \alpha$ is a cover of $H$ by $G$, and that $(c \circ \alpha)(f_v)=c(f_w)=h$. But as $c(f_v)=h$ too, the uniqueness of $c$ proved above implies $c \circ \alpha = c$ as desired.
\end{proof}


Note that if $G$ is a planar 1-ended vertex transitive graph, and $f,h$ are flags of \g with $\phi(f)=\phi(h)$, then \ti\ an automorphism $a$ of $G$ with $a(f)=h$ by the definition of $\phi$. \Lr{Lmain}, applied with $H=G$, implies that this automorphism is unique.

\comment{ RUBISH:
\end{corollary}
\begin{proof}
Applying \Lr{Lmain} with $H=G$, we obtain a cover $a: V(G) \to V(G)$ with $a(f)=h$. We can easily extend $a$ to a cover $a'$ from $\R^2$ to $\R^2$ by mapping the faces incident to any vertex $v$ to corresponding faces of $a(v)$. Since any such cover $a'$ is a homeomorphism, $a$ is injective, therefore an automorphism.
\end{proof}
}

\comment{
\section{The automorphism group of a planar \Cg}

\begin{theorem}\label{}
Let \g be an 1-ended, planar \Cg\ of a group \Gam. Then $\Aut(G) = \Gam \times D$ where $D$ is a finite cyclic or dihedral group.
\end{theorem} 
\begin{proof} 	

\end{proof}

\begin{lemma}\label{}	
Let \g be a vertex-transitive 1-ended planar graph, and $o\in V(G)$. Then $\Aut(D_r(o))$ is cyclic or dihedral. \mymargin{which r?}
\end{lemma}
\begin{proof}

\end{proof}
}

\section{Further remarks}

One could try to strengthen \Prb{pr1} by demanding that \ti\ a cover arising by taking a group-theoretic quotient of \G, i.e.\ by imposing some further relations to \G, so that the covered graph is a \Cg\ of a quotient of the group of \G. However, the following example shows that this is not possible even in the abelian case: we construct a \Cg\ \g and a family $K=K(l,k)$ of $l-1$-locally-$G$ graphs \st\ \g covers $K$ but $K$ is not even vertex-transitive.

\medskip
{\bf Example:} Let $G'$ be the \Cg\ of $Z \times Z/k$ with the standard generators $(1,0), (0,1)$. Let \g be the graph obtained from the union of two disjoint copies of $G'$ after joining every vertex $x$ of the first copy to every vertex in the second copy that is at the same `height', i.e.\ has the same first coordinate (and so $x$ obtains $k$ new neighbours in the other copy). Easily, \g is a \Cg.

Let $H$ be a toroidal grid of `length' $l$ and `width' $k$ (you may fix $k$ to 4, say). We index the vertices of $H$ as $x_i^j, 0\leq i < l, 0\leq j < k$, so that the neighbours of any $x_i^j$ are $x_{i-1}^j,x_{i+1}^j, x_i^{j-1},x_i^{j+1}$, where all lower indices are $\mod l$ and upper ones $\mod k$. Let $H'$ be a copy of $H$ with its vertices indexed by $y_i^j$ as above. Modify $H'$ into a new graph $H''$ by rerouting one level of its edges: for every $0\leq j < k$, we remove the edge from $x_0^j$ to $x_1^j$ and add an edge from $x_0^j$ to $x_1^{j+1}$. Note that both $H$ and $H''$ look locally like a toroidal grid when $k>>l$, but $H \cup H''$ is not vertex transitive: it has no automorphism mapping $H$ to $H''$. 

Let us now add some edges to $H \cup H''$ to make it connected: for every $i$, we join each of the $k$ vertices in $\{x_i^j \mid 0\leq j < k\}$ to each of the $k$ vertices in $\{x_i^j \mid 0\leq j < k\}$ by a new edge, and let $K$ be the resulting graph.

Note that for every $v,w\in V(K)$, the balls of radius $diam(K) - 1 - \floor{\frac{l}{2}}$ around $v$ and $w$ are isomorphic, yet $K$ is not vertex transitive. 
Still, it is easy to see that \g covers every such $K$.

\comment{
\section{Graphs with discrete automorphism group, and normal covers}

The following is probably well-known
\begin{lemma}
Let \g be a graph \st\ $Aut(G)$ is discrete. Thene there is $r\in \N$ such that every automorphism of $B_o(r)$ extends to an automorphism of $G$.
\end{lemma}
\begin{proof}
If not, then there is an infinite sequence \seq{b} \st\ $b_r$ is an automorphism of $B_o(r)$ that does not extend to an automorphism of $B_o(r+1)$.

\end{proof}
}

\comment{\section{Problems}

A similar problem:
\begin{problem}
Let $G_1,G_2$ be such that any 2-ball of $G_1$ is isomorphic to any 2-ball of $G_2$. Is there a \g that covers both $G_i$ and has isomorphic 2-balls to the $G_i$?
\end{problem}

If I remember well, Itai was also asking a question similar to the above, but \g now is {\em covered} by both Gi, with some additional condition (finite fibers?)
}

\acknowledgement{Most of this work, in particular \Prb{pr1} and \Tr{main}, was triggered by discussions with Itai Benjamini.
I am grateful to David Ellis for the idea of making our covers normal in \Tr{main}.}

\bibliographystyle{plain}
\bibliography{collective}
\end{document}